\newtheorem{thm}{Theorem}[section]
\newtheorem{lem}[thm]{Lemma}
\newtheorem{cor}[thm]{Corollary}
\theoremstyle{definition}
\newtheorem{defn}[thm]{Definition}
\newtheorem{obs}[thm]{Observation}
\renewcommand{\Re}{\mathbb R}
\renewcommand{\epsilon}{\varepsilon}
\newcommand{\Ren}{\Re^n}
\newcommand{\Renp}{\Re^{n+1}}
\newcommand{\Sonp}{SO(n+1)}
\newcommand{\Se}{\mathbb S}
\newcommand{\Sen}{\Se^n}
\newcommand{\B}{\mathbf B}
\newcommand{\FF}{\mathcal F}
\newcommand{\HH}{\mathcal H}
\newcommand{\st}{\; : \; }
\renewcommand{\phi}{\varphi}
\DeclareMathOperator{\inter}{int}
\DeclareMathOperator{\cl}{cl}
\DeclareMathOperator{\vol}{vol}
\DeclareMathOperator{\card}{card}
\newcommand\fact[2][]{
  \ifx&#1& %If there is no 2nd arg.
  \refstepcounter{equation}
  \fi
  \begin{minipage}{0.09\textwidth}
  \ifx&#1& %If there is no 2nd arg.
  (\theequation)
  \else %If there is 2nd arg.
  {#1}
  \fi
  \end{minipage}
  \hfill
  \begin{minipage}{0.81\textwidth}
  \emph{
  \begin{sloppypar}
  #2
  \end{sloppypar}
  }
  \end{minipage}
  \smallskip
}
\title{On some covering problems in geometry}
\author[M. Nasz\'odi]{M\'arton Nasz\'odi}
\address{
M\'arton Nasz\'odi,
Dept. of Geometry,
Lorand E\"otv\"os University,
P\'azm\'any P\'eter S\'et\'any 1/C
Budapest, Hungary 1117
}
\email{marton.naszodi@math.elte.hu}
\keywords{covering, Rogers' bound, spherical cap, density, set-cover}
\subjclass[2010]{52C17, 05B40, 52A23}
\thanks{The author acknowledges the support of the J\'anos Bolyai 
Research Scholarship of the Hungarian Academy of Sciences, and
the Hung. Nat. Sci. Found. (OTKA) grant PD104744.}
\begin{document}
\begin{abstract}
We present a method to obtain upper bounds on covering numbers.
As applications of this method, we reprove and generalize results of Rogers 
on economically covering Euclidean $n$-space with translates 
of a convex body, or more generally, any measurable set.
We obtain a bound for the density of covering the 
$n$-sphere by rotated copies of a spherically convex set (or, any measurable 
set). Using the same method, we sharpen an estimate by 
Artstein--Avidan and Slomka on covering a bounded set by translates of another.

The main novelty of our method is that it is not probabilistic. The key idea,
which makes our proofs rather simple and uniform through different settings, is 
an algorithmic result of Lovász and Stein.
\end{abstract}

\maketitle

\section{Introduction}

Given two sets $K$ and $L$ in $\Ren$ (resp. $\Sen$), and we want to cover $K$ 
by as few translates (resp. rotated copies) of $L$ as possible. Upper bounds 
for these kind of covering problems are often obtained by probabilistic 
methods, that is, by taking randomly chosen copies of $L$. We present a method 
that relies on an algorithmic result of Lov\'asz and Stein, and yields proofs 
that are 
simple, non-probabilistic and quite uniform through different geometric 
settings.

For two Borel measurable sets $K$ and $L$ in $\Ren$, let $N(K,L)$ denote the 
\emph{translative covering number} of $K$ by $L$ ie. the minimum number of 
translates of $L$ that cover $K$.

\begin{defn}\label{defn:fraccovcvx}
 Let $K$ and $L$ be bounded Borel measurable sets in $\Ren$. A \emph{fractional 
covering} of $K$ by translates of $L$ is a Borel measure $\mu$ on $\Ren$ with 
$\mu(x-L)\geq 1$ for all $x\in K$. The \emph{fractional covering number} of $K$ 
by translates of $L$ is
\[
 N^\ast(K,L)=
 \]\[\inf\left\{\mu(\Ren)\st \mu \mbox{ is a fractional covering of } K 
\mbox{ by translates of } L\right\}.
\]
\end{defn}

Clearly, in Definition~\ref{defn:fraccovgen} we may assume that a fractional 
cover $\mu$ is supported on $\cl(K-L)$. 
According to Theorem~1.7 of \cite{AS}), we have
\begin{equation}\label{eq:simpleupperbound}
\max\left\{\frac{\vol(K)}{\vol(L)},1\right\}\leq 
N^\ast(K,L)\leq\frac{\vol(K-L)}{\vol(L)}.
\end{equation}

Here, the upper bound is easy to see, as the Lebesgue measure restricted to 
$K-L$ with the following scaling $\mu=\vol/\vol(L)$ is a fractional cover of 
$K$ by translates of $L$.

For two sets $K, T\subset\Ren$, we denote their \emph{Minkowski difference} by 
$K\sim T=\{x\in \Ren\st T+x \subseteq K\}$. 

\begin{thm}\label{thm:cvxIG}
  Let $K, L$ and $T$ be bounded Borel measurable sets in $\Ren$ and let 
$\Lambda\subset\Ren$ be a finite set with $K\subseteq \Lambda+T$. Then
  \begin{equation}\label{eq:cvxIG}
  N(K,L)\leq 
  \end{equation}
\[ 
  (1+\ln(
  \max_{x\in K-L} \card ((x+(L\sim T))\cap \Lambda )
  ) )
  \cdot N^\ast(K-T,L\sim T).
\]
  If $\Lambda\subset K$, then we have
  \begin{equation}\label{eq:cvxIGspec}
  N(K,L)\leq 
  \end{equation}
\[  (1+\ln(
  \max_{x\in K-L} \card ((x+(L\sim T))\cap \Lambda )
  ) )
  \cdot N^\ast(K,L\sim T).
\]
\end{thm}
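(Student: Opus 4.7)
The plan is to convert the covering problem into a finite set-cover instance on $\Lambda$ and invoke the Lov\'asz--Stein bound, which states that the greedy cover for a finite set system has size at most $(1+\ln d)$ times the LP value, where $d$ is the maximum set size. This reduces the geometric problem to a standard set-cover analysis, with the Minkowski difference $L\sim T$ appearing naturally as the ``shrunk'' covering body.

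The first step is the \emph{discretisation}: show that if $z_1,\dots, z_N\in K-L$ satisfy $\Lambda\subseteq\bigcup_i(z_i+(L\sim T))$, then $\{z_i+L\}_{i=1}^N$ covers $K$. This follows from the identity $(L\sim T)+T\subseteq L$ combined with $K\subseteq\Lambda+T$: given $y\in K$, choose $\lambda\in\Lambda$ with $y\in\lambda+T$; if $\lambda\in z_i+(L\sim T)$, then $y\in \lambda+T\subseteq z_i+(L\sim T)+T\subseteq z_i+L$. Thus $N(K,L)$ is bounded above by the set-cover number of $\Lambda$ in the family $\FF=\{(x+(L\sim T))\cap\Lambda\st x\in K-L\}$, and one may restrict attention to $x\in K-L$ because any translate $x+L$ disjoint from $K$ is useless.

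The second step bounds the LP value $\tau^\ast$ of this finite set cover by $N^\ast(K-T,L\sim T)$. After discarding $\lambda\in\Lambda$ with $(\lambda+T)\cap K=\emptyset$ (which does not affect the cover), one has $\Lambda\subseteq K-T$. Given a fractional cover $\mu$ of $K-T$ by translates of $L\sim T$, partition the parameter space by the equivalence ``$x$ induces the same subset $F=(x+(L\sim T))\cap\Lambda$ of $\Lambda$'', and set $w_F=\mu\bigl(\{x\st (x+(L\sim T))\cap\Lambda=F\}\bigr)$. For each $\lambda\in\Lambda$,
\[
\sum_{F\ni\lambda}w_F=\mu\bigl(\lambda-(L\sim T)\bigr)\geq 1,
\]
giving LP feasibility with objective $\sum_F w_F\leq\mu(\Ren)$; infimising over $\mu$ yields $\tau^\ast\leq N^\ast(K-T,L\sim T)$. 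Lov\'asz--Stein then produces an integer cover of $\Lambda$ of size at most $(1+\ln d)\tau^\ast$ with $d\leq \max_{x\in K-L}\card((x+(L\sim T))\cap\Lambda)$, finishing \eqref{eq:cvxIG}. For \eqref{eq:cvxIGspec}, the assumption $\Lambda\subseteq K$ means any fractional cover $\mu$ of $K$ by $L\sim T$ already satisfies $\mu(\lambda-(L\sim T))\geq 1$ for all $\lambda\in\Lambda$, so the same pipeline runs with $N^\ast(K,L\sim T)$ in place of $N^\ast(K-T,L\sim T)$.

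The main obstacle I anticipate is the transfer from a continuous Borel-measure-valued fractional cover $\mu$ to a finite LP solution. This is resolved by noting that, although the parameter set $x\in K-L$ is a continuum, the induced set system on the finite $\Lambda$ has at most $2^{\card\Lambda}$ distinct members, and the equivalence classes are Borel (being finite boolean combinations of the level sets $\{x\st \lambda\in x+(L\sim T)\}$ for $\lambda\in\Lambda$), so $\mu$ pushes forward cleanly to weights on a genuine finite set cover where Lov\'asz--Stein applies verbatim.
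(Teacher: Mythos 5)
Your proposal is correct and follows essentially the same route as the paper: the paper packages the discretisation step, the monotonicity of the fractional covering number under passing to $\Lambda\subseteq U$, and the Lov\'asz--Stein greedy bound into Observation~\ref{obs:IG} and then substitutes $\FF=\{L+x\st x\in K-L\}$, $\FF'=\{(L\sim T)+x\st x\in K-L\}$, $U=K-T$ (resp.\ $U=K$), exactly the pipeline you unfold. Your explicit verification of $(L\sim T)+T\subseteq L$, the discarding of $\lambda$ with $(\lambda+T)\cap K=\emptyset$, and the pushforward of $\mu$ to a finite LP solution are just the spelled-out details of that substitution.
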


For a set $K\subset\Ren$ and $\delta>0$, we denote the \emph{$\delta$-inner 
parallel body} of $K$ by $K_{-\delta}:=K\sim \B(o,\delta)=\{x\in K\st 
\B(x,\delta)\subseteq K\}$, where $\B(x,\delta)$ denotes the Euclidean ball of 
radius $\delta$ centered at $x$. As an application of Theorem~\ref{thm:cvxIG}, 
we will obtain

\begin{thm}\label{thm:Renbyanything}
Let $K\subseteq\Ren$ be a bounded measurable set.
Then there is a covering of $\Ren$ by translated copies of $K$ of density at 
most
\[
 \inf_{\delta>0}\left[
  \frac{\vol(K)}{\vol(K_{-\delta})}
  \left( 
1+\ln\frac{\vol\left(K_{-\delta/2}\right)}{\vol\left(\B\left(o,\frac{\delta}{2}
\right)\right)
}\right)\right].
\]
\end{thm}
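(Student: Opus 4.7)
The plan is to apply inequality \eqref{eq:cvxIGspec} of Theorem~\ref{thm:cvxIG} to the covering of a large cube $Q_R=[-R,R]^n$ by translates of $K$. I would fix $\delta>0$ and take the auxiliary set to be $T=\B(o,\delta)$, so that $K\sim T=K_{-\delta}$ is exactly the inner parallel body appearing in the statement.

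For the finite set $\Lambda$, I would take a maximal $\delta$-separated subset of $Q_R$. Maximality gives $Q_R\subseteq\Lambda+\B(o,\delta)=\Lambda+T$, while $\delta$-separation makes the open balls $\B(\lambda,\delta/2)$, $\lambda\in\Lambda$, pairwise disjoint. To bound the combinatorial factor $\max_{x\in Q_R-K}\card((x+K_{-\delta})\cap\Lambda)$, the key observation is the elementary inclusion $K_{-\delta}+\B(o,\delta/2)\subseteq K_{-\delta/2}$: if $y\in K_{-\delta}$ and $|z|\le\delta/2$ then for every $|w|\le\delta/2$ we have $y+z+w\in y+\B(o,\delta)\subseteq K$, so $y+z\in K_{-\delta/2}$. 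Hence for any $\lambda\in x+K_{-\delta}$ the ball $\B(\lambda,\delta/2)$ sits in $x+K_{-\delta/2}$, and summing volumes of these disjoint balls yields
\[
\card\bigl((x+K_{-\delta})\cap\Lambda\bigr)\le\frac{\vol(K_{-\delta/2})}{\vol(\B(o,\delta/2))}.
\]

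Combining this with the fractional bound $N^\ast(Q_R,K_{-\delta})\le\vol(Q_R-K_{-\delta})/\vol(K_{-\delta})$ furnished by \eqref{eq:simpleupperbound}, inequality~\eqref{eq:cvxIGspec} gives
\[
N(Q_R,K)\le\left(1+\ln\frac{\vol(K_{-\delta/2})}{\vol(\B(o,\delta/2))}\right)\cdot\frac{\vol(Q_R-K_{-\delta})}{\vol(K_{-\delta})}.
\]
Multiplying by $\vol(K)/\vol(Q_R)$ and letting $R\to\infty$, the ratio $\vol(Q_R-K_{-\delta})/\vol(Q_R)$ tends to $1$ because $K$ is bounded, so the density of the cube-cover converges to the bracketed expression in the statement. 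Taking the infimum over $\delta>0$ then finishes the argument, provided a covering of all of $\Ren$ of the same asymptotic density has been assembled.

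That assembly is the main technical obstacle. Passing from efficient coverings of larger and larger cubes to an honest covering of $\Ren$ whose density does not exceed the limit above is a standard manoeuvre in the Rogers theory: either a Blaschke-selection compactness argument applied to the locally finite families of translates, or periodic repetition of a near-optimal cube-cover with a vanishing boundary error, should do the job, and I would invoke one of these routes without further comment.
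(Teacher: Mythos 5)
Your proposal is correct and follows essentially the same route as the paper's own proof: a large cube, a maximal $\delta$-separated net $\Lambda$, the inclusion $K_{-\delta}+\B(o,\delta/2)\subseteq K_{-\delta/2}$ combined with disjointness of the balls $\B(\lambda,\delta/2)$ to bound the logarithmic factor, the trivial fractional bound from \eqref{eq:simpleupperbound}, and Theorem~\ref{thm:cvxIG}. The only cosmetic difference is that the paper lets $\Lambda$ live in a slight enlargement of the cube and invokes \eqref{eq:cvxIG} rather than \eqref{eq:cvxIGspec}, and it dispatches the final passage to a covering of all of $\Ren$ via the standard density formalism of Rogers' book, exactly the manoeuvre you allude to.
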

The $\delta$-inner parallel body could be defined with respect to a norm that 
is distinct from the Euclidean. As is easily seen from the proof, the 
theorem would still hold.

Now, we turn to coverings on the sphere.
We denote the Haar probability measure on $\Sen\subset\Renp$ by $\sigma$, the 
closed spherical cap of spherical radius $\phi$ centered at $u\in\Sen$ by 
$C(u,\phi)$, and its measure by $\Omega(\phi)=\sigma(C(u,\phi))$. For a set 
$K\subset\Sen$ and $\delta>0$, we denote the \emph{$\delta$--inner parallel 
body} of $K$ by $K_{-\delta}=\{u\in K\st C(u,\delta)\subseteq K\}$.

A set $K\subset\Sen$ is called \emph{spherically convex}, if it is contained in 
an open hemisphere and for any two of its points, it contains the shorter great 
circular arc connecting them.

The \emph{spherical circumradius} of a subset of an open hemisphere of $\Sen$ 
is the spherical radius of the smallest spherical cap (the \emph{circum-cap}) 
that contains the set.

\begin{thm}\label{thm:spherebyanything}
Let $K\subseteq\Sen$ be a measurable set.
Then there is a covering of $\Sen$ by rotated copies of $K$ of density at most
\[
 \inf_{\delta>0}\left[
  \frac{\sigma(K)}{\sigma(K_{-\delta})}
  \left( 
1+\ln\frac{\sigma\left(K_{-\delta/2}\right)}{\Omega\left(\frac{\delta}{2}\right)
}\right)\right].
\]
\end{thm}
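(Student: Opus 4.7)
The approach is to mimic, in the spherical setting, the application of Theorem~\ref{thm:cvxIG} that would prove Theorem~\ref{thm:Renbyanything}: replace translations in $\Ren$ by rotations in $\Sonp$, and Lebesgue measure by the Haar probability measure $\sigma$.

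The first step is to set up a spherical analog of Theorem~\ref{thm:cvxIG}. Fix $\delta>0$, let $T=C(u_0,\delta)$ for a reference point $u_0\in\Sen$, and let $\Lambda\subseteq\Sen$ be a maximal $\delta$-separated subset, which is then automatically $\delta$-dense, so that $\{C(\lambda,\delta):\lambda\in\Lambda\}$ covers $\Sen$. For each $\rho\in\Sonp$, a point $\lambda\in\Lambda\cap\rho K_{-\delta}$ records a discretization site whose cap $C(\lambda,\delta)$ is absorbed by the rotated copy $\rho K$. Because any covering of the finite set $\Lambda$ by such subsets yields a covering of $\Sen$ by the corresponding rotated copies of $K$, the Lov\'asz--Stein inequality gives
\[
N(\Sen,K)\leq\bigl(1+\ln\max_{\rho\in\Sonp}\card(\Lambda\cap\rho K_{-\delta})\bigr)\cdot N^{\ast}(\Sen,K_{-\delta}),
\]
where $N^{\ast}(\Sen,K_{-\delta})$ is the obvious rotational analog of the fractional covering number of Definition~\ref{defn:fraccovcvx}, defined through Borel measures on $\Sonp$. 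The proof is the direct transcription of the argument behind Theorem~\ref{thm:cvxIG}, with the set-cover instance reformulated on $\Lambda$ instead of on $K$.

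Next I would bound the two factors. The Haar-measure analog of the upper bound in \eqref{eq:simpleupperbound}, obtained by using a scaled copy of $\sigma$ on $\Sonp$ as an explicit fractional cover, yields $N^{\ast}(\Sen,K_{-\delta})\leq\sigma(\Sen)/\sigma(K_{-\delta})=1/\sigma(K_{-\delta})$. For the counting factor, fix $\rho\in\Sonp$: points of $\Lambda\cap\rho K_{-\delta}$ are pairwise at spherical distance at least $\delta$, so the caps $C(\lambda,\delta/2)$ around them are pairwise disjoint. Moreover, $\lambda\in\rho K_{-\delta}$ means $C(\lambda,\delta)\subseteq\rho K$, and for any $v\in C(\lambda,\delta/2)$ the spherical triangle inequality gives $C(v,\delta/2)\subseteq C(\lambda,\delta)\subseteq\rho K$, that is, $v\in\rho K_{-\delta/2}$. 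Hence these disjoint caps all fit inside $\rho K_{-\delta/2}$, and a $\sigma$-volume comparison gives $\card(\Lambda\cap\rho K_{-\delta})\leq\sigma(K_{-\delta/2})/\Omega(\delta/2)$.

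Combining the two estimates bounds $N(\Sen,K)$; multiplying by $\sigma(K)/\sigma(\Sen)=\sigma(K)$ converts this integer count into a covering density, and taking $\inf_{\delta>0}$ produces the claimed inequality. The step I expect to be the main obstacle is the clean formulation of the spherical counterpart of Theorem~\ref{thm:cvxIG}: the combinatorial Lov\'asz--Stein derandomization transfers essentially verbatim, but the rotational fractional covering number has to be defined rigorously through Haar measures on $\Sonp$, and one must justify $N^{\ast}\leq\sigma(\Sen)/\sigma(K_{-\delta})$ in this setting; the surviving geometric estimates on $\Sen$ are then just standard separated-packing arguments.
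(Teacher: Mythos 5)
Your proposal is correct and takes essentially the same route as the paper: a maximal $\delta$-separated set $\Lambda$ (equivalently, centers of a saturated packing of caps of radius $\delta/2$), the observation that covering $\Lambda$ by rotated copies of $K_{-\delta}$ lifts to a covering of $\Sen$ by copies of $K$, the scaled Haar measure on $\Sonp$ as an explicit fractional cover giving $1/\sigma(K_{-\delta})$, and the disjoint-caps packing bound $\sigma(K_{-\delta/2})/\Omega(\delta/2)$ inside the logarithm. The only cosmetic difference is that the paper invokes its abstract Observation~\ref{obs:IG} directly rather than first stating a spherical analog of Theorem~\ref{thm:cvxIG}, and its Definition~\ref{defn:fraccovgen} already formalizes fractional covers under a group action, so the rigor concern you flag is already handled there.
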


\begin{cor}\label{cor:spherebyconvex}
Let $K\subseteq\Sen$ be a spherically convex set of spherical circumradius 
$\rho$.
Then there is a covering of $\Sen$ by rotated copies of $K$ of density at most
\[
 \inf_{\kappa>0\st K_{-\left(\kappa\rho\right)}\neq\emptyset}\left[ 
  \frac{\sigma(K)}{\sigma(K)-\Omega(\rho)\left(1-(1-\kappa)^n\right)}
  \left( 
2n+n\ln\frac{1}{\kappa\rho}
  \right)\right].
\]
\end{cor}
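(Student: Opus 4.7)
The plan is to apply Theorem~\ref{thm:spherebyanything} to $K$ with $\delta:=\kappa\rho$, and to bound the two resulting factors $\sigma(K)/\sigma(K_{-\kappa\rho})$ and $1+\ln\bigl(\sigma(K_{-\kappa\rho/2})/\Omega(\kappa\rho/2)\bigr)$ separately, exploiting that $K\subseteq C(u,\rho)$, where $u$ denotes the center of the circum-cap of $K$.

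For the first factor I would show that the boundary shell $\sigma(K)-\sigma(K_{-\kappa\rho})$ is no larger than the corresponding shell of the circum-cap $C(u,\rho)$. Because $K\subseteq C(u,\rho)$, monotonicity of the inner parallel body in set inclusion gives $K_{-t}\subseteq \bigl(C(u,\rho)\bigr)_{-t}$ for every $t\in[0,\kappa\rho]$, and both sets are spherically convex. The spherical analogue of the classical fact that convex containment reduces surface area then yields that the perimeter of $K_{-t}$ is at most that of $\bigl(C(u,\rho)\bigr)_{-t}$, and integrating via the spherical coarea formula gives
\[
 \sigma(K)-\sigma(K_{-\kappa\rho})\le \Omega(\rho)-\Omega((1-\kappa)\rho).
\]
A short calculus lemma (using that $\sin(t)/t$ is decreasing on $(0,\pi]$) shows that $\phi\mapsto\Omega(\phi)/\phi^n$ is non-increasing, hence $\Omega((1-\kappa)\rho)\ge(1-\kappa)^n\Omega(\rho)$, and the displayed inequality then becomes $\sigma(K_{-\kappa\rho})\ge\sigma(K)-\Omega(\rho)\bigl(1-(1-\kappa)^n\bigr)$.

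For the logarithmic factor, I would use $\sigma(K_{-\kappa\rho/2})\le\sigma(K)\le\Omega(\rho)$ together with the same monotonicity of $\Omega(\phi)/\phi^n$ to obtain $\Omega(\rho)/\Omega(\kappa\rho/2)\le(2/\kappa)^n$. The log factor is then at most $1+n\ln(2/\kappa)$, which fits into $2n+n\ln(1/(\kappa\rho))$ after a short elementary comparison, the additive $2n$ providing the needed slack. Substituting both estimates into Theorem~\ref{thm:spherebyanything} and taking the infimum over admissible $\kappa$ yields the corollary.

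The main obstacle is the spherical perimeter-monotonicity statement invoked in the first step: although it is the natural analogue of the Euclidean fact (and provable via a spherical Cauchy-type projection formula), some care is required on $\Sen$. A more tempting attempt via a radial geodesic contraction toward $u$ fails for ``pointy'' $K$ (the image need not lie in $K_{-\kappa\rho}$), so working with integrated shells rather than pointwise inclusions is essential.
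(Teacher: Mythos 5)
Your proposal follows essentially the same route as the paper: apply Theorem~\ref{thm:spherebyanything} with $\delta=\kappa\rho$, bound the boundary shell $\sigma(K)-\sigma(K_{-\kappa\rho})$ by the corresponding shell $\Omega(\rho)-\Omega((1-\kappa)\rho)$ of the circum-cap and then by $\Omega(\rho)\left(1-(1-\kappa)^n\right)$ via \eqref{eq:BWtszer}, and control the logarithmic factor using that $K$ lies in a cap of radius at most $\pi/2$. Your coarea-plus-spherical-perimeter-monotonicity argument correctly supplies the justification for the shell comparison that the paper only asserts, and your treatment of the log term (via $\sigma(K)\le\Omega(\rho)$ and $\Omega(\rho)/\Omega(\kappa\rho/2)\le(2/\kappa)^n$) is a harmless variant of the paper's use of $\sigma(K_{-\delta/2})\le 1$ together with \eqref{eq:BWtszer} and Jordan's inequality.
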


We prove the Euclidean results in Section~\ref{sec:Renbound}, and the spherical 
results in Section~\ref{sec:Sphbound}.

\section{History}\label{sec:history}

An important point in the theory of coverings in geometry is the following 
theorem of Rogers \cite{Ro57}. For a definition of the covering density, cf. 
\cite{RoBook64}.
\begin{thm}[Rogers, \cite{Ro57}]\label{thm:Rogers}
  Let $K$ be a bounded convex set in $\Ren$ with non-empty interior. Then the 
covering density of $K$ is at most
  \begin{equation}
  \theta(K)\leq n\ln n+ n\ln\ln n + 5n.
  \end{equation}
\end{thm}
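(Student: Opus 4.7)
The plan is to obtain Theorem~\ref{thm:Rogers} by applying Theorem~\ref{thm:Renbyanything} to a convex body $K$ and using convexity to simplify the resulting density bound. Since the covering density is invariant under invertible affine maps, I would first normalize $K$ via John's theorem: after an affine change of coordinates, assume the maximal-volume ellipsoid inscribed in $K$ is the Euclidean unit ball $\B(o,1)$, so that $\B(o,1)\subseteq K\subseteq \B(o,n)$.

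The key convexity estimate bounds $\vol(K_{-\delta})$ from below. For $\delta \in (0,1)$ and any $y\in K$, set $x=(1-\delta)y$; then
\[
\B(x,\delta)=(1-\delta)y+\delta \B(o,1)\subseteq (1-\delta)K+\delta K=K,
\]
using $\B(o,1)\subseteq K$ and the identity $(1-\delta)K+\delta K=K$ valid for convex $K$. Hence $(1-\delta)K\subseteq K_{-\delta}$, giving $\vol(K)/\vol(K_{-\delta})\leq (1-\delta)^{-n}$. Combined with the trivial bounds $\vol(K_{-\delta/2})\leq\vol(K)\leq n^n\vol(\B(o,1))$, Theorem~\ref{thm:Renbyanything} yields
\[
\theta(K)\leq (1-\delta)^{-n}\bigl(1+n\ln(2n/\delta)\bigr).
\]

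It remains to optimize in $\delta$. Taking $\delta\approx 1/(n\ln n)$ makes $(1-\delta)^{-n}=1+O(1/\ln n)$ while the logarithmic factor expands as $n\ln(2n/\delta)=2n\ln n+n\ln\ln n+O(n)$, producing a bound of Rogers' asymptotic shape. The principal obstacle I foresee is squeezing the leading coefficient from $2n\ln n$ (the natural output of the John inclusion $K\subseteq n\B(o,1)$) down to Rogers' sharp $n\ln n$; this gap likely closes by reducing first to centrally symmetric bodies, where John gives the tighter $K\subseteq\sqrt{n}\,\B(o,1)$ and hence a saving in the volume estimate, followed by a careful book-keeping of the additive $O(n)$ terms to match the precise $5n$ constant in Rogers' theorem.
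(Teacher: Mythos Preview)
Your reduction via John's theorem loses exactly the factor you cannot recover. Even in the centrally symmetric case with $\B(o,1)\subseteq K\subseteq \sqrt{n}\,\B(o,1)$, your volume bound gives
\[
\ln\frac{\vol(K_{-\delta/2})}{\vol\bigl(\B(o,\delta/2)\bigr)}
\;\le\;\ln\frac{\vol(K)}{\vol\bigl(\B(o,\delta/2)\bigr)}
\;\le\;\frac{n}{2}\ln n + n\ln\frac{2}{\delta},
\]
so after inserting $\delta\approx 1/(n\ln n)$ the bracket in Theorem~\ref{thm:Renbyanything} becomes $\tfrac{3}{2}n\ln n + n\ln\ln n + O(n)$, not $n\ln n+\dots$. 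Your proposed fix (``reduce to symmetric and use $\sqrt{n}$'') still leaves the extra $\tfrac{1}{2}n\ln n$, and in the non-symmetric case the loss is a full $n\ln n$. The issue is structural: measuring the inner parallel body with \emph{Euclidean} balls forces the John distortion into the logarithm.

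The paper sidesteps this entirely by not routing through Theorem~\ref{thm:Renbyanything}. It applies Theorem~\ref{thm:cvxIG} directly, choosing the net template $T$ to be a scaled copy of $K$ itself (for $K=-K$) or of $K\cap(-K)$ (in general, using the Milman--Pajor inequality $\vol(K\cap -K)\ge 2^{-n}\vol K$). Then $L\sim T=(1-\delta)K$ and the cardinality bound becomes $\card\bigl(\Lambda\cap(x+(1-\delta)K)\bigr)\le (2/\delta)^n$ (resp.\ $(4/\delta)^n$), with \emph{no} dimensional distortion factor. Equivalently, you could salvage your route by invoking the remark after Theorem~\ref{thm:Renbyanything} and taking the inner parallel body with respect to the norm whose unit ball is $K\cap(-K)$ rather than the Euclidean one; that is the missing idea.
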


Earlier, exponential upper bounds for the covering density were obtained by 
Rogers, Bambah and Roth, and for the special case of the Euclidean ball by 
Davenport and Watson (cf. \cite{Ro57} for references).
The current best bound is due to G. Fejes Tóth \cite{FTG09}, who replaced the 
last term $5n$ by $n+o(n)$.

We will obtain Theorem~\ref{thm:Rogers} as a corollary to our more general 
Theorem~\ref{thm:Renbyanything}.

Another classical example of a geometric covering problem is the following.
Estimate the minimum number of spherical caps of radius $\phi$ needed 
to cover the sphere $\Sen$ in $\Renp$.
\begin{thm}[B\"or\"oczky Jr. and Wintsche, \cite{BW03}]\label{thm:spherebycaps}
Let $0<\phi<\frac{\pi}{2}$. Then there is a covering of $\Sen$ by spherical 
caps of radius $\phi$ with density at most $n\ln n+n\ln\ln n+5n$. 
\end{thm}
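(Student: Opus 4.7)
The plan is to apply Theorem~\ref{thm:spherebyanything} with the choice $\delta=\kappa\rho$ and bound each of the two factors in the resulting expression, then take the infimum over admissible $\kappa$.

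For the factor $\sigma(K)/\sigma(K_{-\kappa\rho})$, the target is the inequality $\sigma(K_{-\kappa\rho})\geq \sigma(K)-\Omega(\rho)(1-(1-\kappa)^n)$. I would derive it in three steps. First, since $K\subseteq C(u,\rho)$ for some circum-cap $C(u,\rho)$, the triangle inequality gives $K_{-t}\subseteq C(u,\rho-t)$ for every $t\in[0,\rho]$; moreover, $K_{-t}$ itself is spherically convex. Second, monotonicity of the spherical perimeter under inclusion of spherically convex sets (a consequence of the $1$-Lipschitz property of nearest-point projection onto a convex set) gives $\mathrm{Per}(K_{-t})\leq \mathrm{Per}(C(u,\rho-t))$. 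Combining this with the coarea-type identity
\[
\sigma(K)-\sigma(K_{-\delta}) = \int_0^\delta \mathrm{Per}(K_{-t})\,dt,
\]
valid for spherically convex $K$, I get
\[
\sigma(K)-\sigma(K_{-\kappa\rho}) \leq \int_0^{\kappa\rho}\mathrm{Per}(C(u,\rho-t))\,dt = \Omega(\rho)-\Omega((1-\kappa)\rho).
\]
Third, the elementary inequality $\Omega(t\rho)\geq t^n\Omega(\rho)$ for $t\in(0,1)$---which follows from $\sin(ts)\geq t\sin(s)$ (concavity of $\sin$ on $[0,\pi]$) upon integration, equivalently from the monotonicity of $\phi\mapsto \Omega(\phi)/\phi^n$---yields $\Omega(\rho)-\Omega((1-\kappa)\rho)\leq \Omega(\rho)(1-(1-\kappa)^n)$, completing the estimate.

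For the logarithmic factor $1+\ln\bigl(\sigma(K_{-\kappa\rho/2})/\Omega(\kappa\rho/2)\bigr)$, I would use the trivial bound $\sigma(K_{-\kappa\rho/2})\leq 1$ together with a standard lower estimate on the cap measure of the form $\Omega(\phi)\geq c_n\phi^n$ for $\phi\in(0,\pi/2]$, obtainable from $\sin\phi\geq 2\phi/\pi$ and the integral representation of $\Omega$. Absorbing the $n$-dependent constants into the leading coefficient then yields
\[
1+\ln\frac{\sigma(K_{-\kappa\rho/2})}{\Omega(\kappa\rho/2)} \leq 2n+n\ln\frac{1}{\kappa\rho}.
\]
Multiplying the two bounds and taking the infimum over admissible $\kappa$ gives the corollary.

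The main obstacle is the careful transfer of the three Euclidean tools used in the first step---the coarea identity, spherical convexity of $K_{-t}$, and monotonicity of perimeter under inclusion of convex bodies---to the sphere. Each is classical in Euclidean convex geometry, and each has a standard spherical analog, but packaging them cleanly either requires a citation to the Alexandrov-space framework or direct verification via spherical coordinates around the circumcenter $u$.
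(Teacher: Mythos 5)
There is a genuine gap: what you have written is a proof of Corollary~\ref{cor:spherebyconvex} (covering by rotated copies of a general spherically convex $K$ of circumradius $\rho$), not of Theorem~\ref{thm:spherebycaps} --- your own closing sentence says ``gives the corollary.'' More importantly, the corollary does \emph{not} imply the theorem. If you specialize your bound to $K=C(u,\phi)$ (so $\rho=\phi$), the logarithmic factor $2n+n\ln\frac{1}{\kappa\phi}$ contains the term $n\ln(1/\phi)$, which is unbounded as $\phi\to 0$, whereas the claimed density $n\ln n+n\ln\ln n+5n$ is independent of $\phi$. The loss is created exactly where you bound $\sigma(K_{-\kappa\rho/2})\leq 1$ and $\Omega(\kappa\rho/2)\geq c_n(\kappa\rho)^n$ separately: the first estimate throws away the fact that for a cap of radius $\phi$ the numerator is itself only about $\Omega(\phi)\approx c_n\phi^n$, so the powers of $\phi$ that should cancel do not.

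The route the paper takes is to apply Theorem~\ref{thm:spherebyanything} directly with $K=C(u,\phi)$ and $\delta=\eta\phi$. Then $K_{-\delta/2}\subseteq C(u,\phi)$ and $K_{-\delta}=C(u,(1-\eta)\phi)$, and the homogeneity estimate \eqref{eq:BWtszer}, $\Omega(t\psi)<t^n\Omega(\psi)$, applied twice gives
\[
\frac{\sigma\left(K_{-\delta/2}\right)}{\Omega\left(\frac{\delta}{2}\right)}\leq\frac{\Omega(\phi)}{\Omega\left(\frac{\eta\phi}{2}\right)}<\left(\frac{2}{\eta}\right)^n,
\qquad
\frac{\sigma(K)}{\sigma(K_{-\delta})}=\frac{\Omega(\phi)}{\Omega((1-\eta)\phi)}<\frac{1}{(1-\eta)^n},
\]
so the density is at most $\left(1+n\ln\frac{2}{\eta}\right)(1-\eta)^{-n}$ with no $\phi$-dependence. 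Your proposal is also missing the final numerical step needed to reach the stated constant: one must choose $\eta=\frac{1}{2n\ln n}$ and verify, as in \eqref{eq:lnnesszam}, that $\left(1+n\ln(4n\ln n)\right)\exp(1/\ln n)\leq n\ln n+n\ln\ln n+5n$. Your perimeter/coarea argument for the belt comparison is a reasonable (if heavier) way to prove the corollary itself, but it is the wrong tool for this theorem.
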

This estimate was proved in \cite{BW03} improving an earlier result of Rogers 
\cite{Ro63}. The current best bound is better when $\phi<\frac{\pi}{3}$: Dumer 
\cite{Du07} gave a covering in this case of density at most $\frac{n\ln n}{2}$.

We will obtain Theorem~\ref{thm:spherebycaps} as a corollary to our more 
general Theorem~\ref{thm:spherebyanything}.

The fractional version of $N(K,\inter K)$ (see 
Definition~\ref{defn:fraccovgen}) first appeared in \cite{Na09} and in general 
for $N(K,L)$ in \cite{AR11} and \cite{AS}.

A result very similar to our Theorem~\ref{thm:cvxIG} appeared as 
Theorem~1.6 in the paper \cite{AS} by Artstein-Avidan and Slomka. The main 
differences are the following. 
Quantitatively, our result is somewhat stronger by having $\max \card 
(\dots)$ in the logarithm as opposed to $\card \Lambda$. 
This allows us to obtain Theorems~\ref{thm:Rogers} and \ref{thm:Renbyanything} 
as corollaries to Theorem~\ref{thm:cvxIG}.
Furthermore, we have no minor term of order $\sqrt{\ln (\card 
\Lambda)(N^\ast+1)}$.
The method of the proof in \cite{AS} consist of two parts. One is to reduce the 
problem to a finite covering problem by replacing $K$ by a sufficiently dense 
finite set (a $\delta$-net). Next, a probabilistic argument is used to solve 
the finite covering problem.
A similar route is followed in \cite{FuKa08} where a variant of 
Theorem~\ref{thm:Rogers} (previously obtained in \cite{ErRo61}) is proved 
(using Lov\'asz's Local Lemma) according to which such low density covering of 
$\Ren$ by translates of $K$ exists with the additional requirement that no 
point is covered too many times.
An even earlier appearance of this method in the context of the illumination 
problem can be found in \cite{Sch88}.
A major contribution of \cite{AS} is that they used this method to bridge the 
gap between $N$ and $N^\ast$, that is, they noticed that the method works with 
any measure, not just the volume.

We also use the first part of the method (taking a $\delta$-net), but then 
replace the second (probabilistic) part by a simple application of 
a non-probabilistic result, Lemma~\ref{lem:Lovasz}.

\section{Preliminaries}\label{sec:prelim}

We start with introducing some combinatorial notions.
\begin{defn}\label{defn:fraccovgen}
 Let $Y$ be a set, $\FF$ a family of subsets of $Y$ and $X\subseteq Y$. A 
\emph{covering} of $X$ by $\FF$ is a subset of $\FF$ whose union contains $X$. 
The \emph{covering number} $\tau(X,\FF)$ of $X$ by $\FF$ is the 
minimum cardinality of its coverings by $\FF$.

A \emph{fractional covering} of $X$ by $\FF$ is a measure $\mu$ on $\FF$ with 
\[
\mu(\{F\in\FF\st x\in F\})\geq 1\;\;\;\mbox{ for all } x\in X.
\]
The \emph{fractional covering number} of $\FF$ is
\[
 \tau^\ast(X,\FF)=\inf\left\{\mu(\FF)\st \mu \mbox{ is a fractional covering of 
} X \mbox{ by } \FF\right\}.
\]
When a group $G$ acts on $Y$ and $\FF$ is the set $\{g(A)\st g\in G\}$ for some 
fixed subset $A$ of $Y$, we will identify $F\in\FF$ with $\{g\in G\st 
g(A)=F\}\subseteq G$ and thus, we will call a measure $\mu$  on $G$ a 
fractional covering of $X$ by $G$ if 
\[
\mu(\{g\in G \st x\in g(A)\})\geq 1\;\;\;\mbox{ for all } x\in X.
\]
\end{defn}
For more on (fractional) coverings, cf. \cite{Fu88} in the abstract 
(combinatorial) setting and \cite{Ma02} in the geometric setting.

The gap between $\tau$ and $\tau^\ast$ is bounded in the case of finite set 
families (hypergraphs) by the following result of Lovász \cite{Lo75} and 
Stein\cite{St74}.

\begin{lem}[Lovász \cite{Lo75}, Stein\cite{St74}]\label{lem:Lovasz}
 For any finite $\Lambda$ and $\HH\subseteq 2^\Lambda$ we have
  \begin{equation}\label{eq:LovaszIG}
    \tau(\Lambda,\HH) < (1+\ln(\max_{H\in \HH}\card H))\tau^\ast(\Lambda,\HH).
  \end{equation}
  Furthermore, the greedy algorithm (always picking the set that covers the 
most number of uncovered points) yields a covering of cardinality less than the 
right hand side in \eqref{eq:LovaszIG}.
\end{lem}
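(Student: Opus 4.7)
\smallskip
\noindent\textbf{Proof plan for Corollary~\ref{cor:spherebyconvex}.}
The plan is to derive the corollary directly from Theorem~\ref{thm:spherebyanything} by specializing the free parameter $\delta$ to $\delta:=\kappa\rho$ and then separately bounding the two factors
\[
 \frac{\sigma(K)}{\sigma(K_{-\delta})}
 \qquad\text{and}\qquad
 1+\ln\frac{\sigma(K_{-\delta/2})}{\Omega(\delta/2)}.
\]
Bounding the second factor is the easy part and will handle itself with soft estimates. The first factor is where spherical convexity has to enter, and it is the heart of the argument.

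First, I would dispose of the logarithmic factor. Since $K_{-\delta/2}\subseteq K$ we have $\sigma(K_{-\delta/2})\leq \sigma(K)$, so the numerator inside the log is bounded by $\sigma(K)\leq 1$. For the denominator I would use the standard cap-measure estimate $\Omega(\phi)\geq c_n\phi^n$ (valid for $0<\phi\leq \pi/2$ and some explicit constant $c_n$ derived from comparison with a Euclidean ball of radius $\sin\phi$). Applied with $\phi=\kappa\rho/2$, this yields
\[
 1+\ln\frac{\sigma(K_{-\delta/2})}{\Omega(\delta/2)}
 \;\leq\; 1+n\ln\frac{2}{\kappa\rho}-\ln c_n,
\]
which, after collecting the $O(n)$ constants, is at most $2n+n\ln\frac{1}{\kappa\rho}$, matching the corresponding factor in the corollary.

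The main work is the inequality
\[
 \sigma(K_{-\kappa\rho})\;\geq\;\sigma(K)-\Omega(\rho)\bigl(1-(1-\kappa)^n\bigr),
\]
which governs the first factor. Let $u\in\Sen$ be the center of the circum-cap $C(u,\rho)\supseteq K$, and consider the radial contraction $\phi_{1-\kappa}:\Sen\to\Sen$ that sends a point at spherical distance $\theta$ from $u$ to the point on the same great-circle arc at distance $(1-\kappa)\theta$. I would establish two things: (i) \emph{inclusion:} $\phi_{1-\kappa}(K)\subseteq K_{-\kappa\rho}$, and (ii) \emph{Jacobian bound:} the contraction distorts Haar measure on $C(u,\rho)$ by at most $(1-\kappa)^n$ pointwise, so for any measurable $A\subseteq C(u,\rho)$,
\[
 \sigma(\phi_{1-\kappa}(A))\;\leq\;(1-\kappa)^n\,\sigma(A).
\]
Inclusion (i) uses spherical convexity: for $v\in K$ the arc from $v$ to $\phi_{1-\kappa}(v)$ lies on the geodesic to $u$, and a geodesic triangle argument shows that the cap of radius $\kappa\rho$ around $\phi_{1-\kappa}(v)$ is contained in the spherically convex hull of $v$ and a neighbourhood already inside $K$, hence inside $K$. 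The Jacobian bound (ii) comes from an explicit computation in spherical (geodesic polar) coordinates around $u$, where the Jacobian of $\phi_{1-\kappa}$ is $(1-\kappa)\cdot\frac{\sin((1-\kappa)\theta)}{\sin\theta}\cdot(1-\kappa)^{n-1}\leq(1-\kappa)^n$ since $\sin$ is concave. Applying (ii) to $A:=C(u,\rho)\setminus K$ yields $\sigma(C(u,(1-\kappa)\rho)\setminus\phi_{1-\kappa}(K))\leq(1-\kappa)^n(\Omega(\rho)-\sigma(K))$, and combining with (i) and $\sigma(C(u,(1-\kappa)\rho))\leq\Omega(\rho)$ gives the desired inequality after rearrangement.

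I expect the main obstacle to be step (i), the inclusion $\phi_{1-\kappa}(K)\subseteq K_{-\kappa\rho}$, since spherical convexity behaves less gracefully under radial contraction than Euclidean convexity does: the tangent-cone argument that works trivially in $\Ren$ needs to be replaced by an argument comparing geodesic arcs on the sphere and using the concavity of $\sin$ to bound how much a supporting great-circle hyperplane of $K$ at a boundary point can shift inward under $\phi_{1-\kappa}$. Once this is in place, substituting the two estimates into the formula from Theorem~\ref{thm:spherebyanything} and taking the infimum over $\kappa$ yields exactly the stated bound.
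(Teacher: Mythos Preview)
Your proposal does not address the stated lemma at all. Lemma~\ref{lem:Lovasz} is the Lov\'asz--Stein bound relating $\tau$ and $\tau^\ast$ for a finite set system; it is a purely combinatorial fact about hypergraphs, and the paper does not prove it but simply quotes it from \cite{Lo75} and \cite{St74}. What you have written is instead a proof plan for Corollary~\ref{cor:spherebyconvex}, a different statement about covering the sphere by copies of a spherically convex set.

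Even read as an argument for Corollary~\ref{cor:spherebyconvex}, your plan both differs from the paper's route and contains a genuine gap. The paper bounds $\sigma(K)-\sigma(K_{-\delta})=\sigma(K\setminus K_{-\delta})$ by the direct geometric claim that this ``belt'' has measure at most that of the belt $C(v,\rho)\setminus C(v,\rho-\delta)$ of the circum-cap, and then applies \eqref{eq:BWtszer}. Your alternative via a radial contraction hinges on the inclusion $\phi_{1-\kappa}(K)\subseteq K_{-\kappa\rho}$, which is false in general. Already in the Euclidean model problem, take a thin rectangle $K=[-a,a]\times[-\epsilon,\epsilon]$ with $\epsilon\ll a$ and circumradius $\rho=\sqrt{a^2+\epsilon^2}$: the point $\bigl(0,(1-\kappa)\epsilon\bigr)\in(1-\kappa)K$ lies at distance $\kappa\epsilon<\kappa\rho$ from $\partial K$, hence is not in $K_{-\kappa\rho}$. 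The same obstruction arises for thin spherically convex sets, and no supporting-great-circle argument can repair step~(i): contraction towards the circumcenter simply does not move points in the ``thin'' direction inward by as much as $\kappa\rho$.
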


The following straightforward corollary to Lemma~\ref{lem:Lovasz} is a key 
element of our proofs.
\begin{obs}\label{obs:IG}
Let $Y$ be a set, $\FF$ a family of subsets of $Y$, and $X\subseteq Y$. 
Let $\Lambda$ be a finite subset of $Y$ and $\Lambda\subseteq U\subseteq Y$. 
Assume 
that for another family $\FF^\prime$ of subsets of $Y$ we have 
$\tau(X,\FF)\leq\tau(\Lambda,\FF^\prime)$. Then
  \begin{equation}\label{eq:IG}
  \tau(X,\FF)\leq 
  \tau(\Lambda,\FF^\prime)\leq
  (1+\ln( \max_{F^\prime\in\FF^\prime} \card \{\Lambda\cap F^\prime \} ) ) 
  \cdot \tau^\ast(U, \FF^\prime).
  \end{equation}
\end{obs}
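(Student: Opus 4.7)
The plan is to deduce the second inequality in \eqref{eq:IG} from Lemma~\ref{lem:Lovasz} by restricting $\FF'$ to the finite ground set $\Lambda$; the first inequality is simply the hypothesis, so no work is needed there.

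Concretely, I would set $\HH := \{F' \cap \Lambda \st F' \in \FF'\} \subseteq 2^{\Lambda}$. Since a subcollection of $\FF'$ covers $\Lambda$ if and only if the corresponding traces cover $\Lambda$, one has $\tau(\Lambda, \FF') = \tau(\Lambda, \HH)$, and moreover $\max_{H \in \HH} \card H = \max_{F' \in \FF'} \card(F' \cap \Lambda)$, which matches the quantity appearing under the logarithm in \eqref{eq:IG}. The crucial intermediate bound I need is
\[
\tau^{\ast}(\Lambda, \HH) \leq \tau^{\ast}(U, \FF').
\]
To prove this, I would take any fractional covering $\mu$ of $U$ by $\FF'$ and push it forward along the map $\FF' \to \HH$, $F' \mapsto F' \cap \Lambda$, obtaining a measure $\nu$ on $\HH$ with $\nu(\HH) = \mu(\FF')$. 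For each $x \in \Lambda \subseteq U$,
\[
\nu(\{H \in \HH \st x \in H\}) \;\geq\; \mu(\{F' \in \FF' \st x \in F'\}) \;\geq\; 1,
\]
so $\nu$ is a fractional covering of $\Lambda$ by $\HH$. The hypothesis $\Lambda \subseteq U$ is used exactly here, to ensure that fractional cover mass at points of $\Lambda$ is inherited from the cover of $U$.

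The remaining step is to apply Lemma~\ref{lem:Lovasz} to the finite hypergraph $(\Lambda, \HH)$, which gives
\[
\tau(\Lambda, \HH) \;<\; \bigl(1 + \ln(\max_{H \in \HH} \card H)\bigr) \cdot \tau^{\ast}(\Lambda, \HH),
\]
and to chain this with the two identifications from the previous paragraph. I do not expect any serious obstacle; the only mildly delicate point is the pushforward construction for $\mu$, which also absorbs any measurability bookkeeping implicit in treating $\FF'$ as a measure space (after pushforward, $\nu$ lives on the finite set $\HH$, where no such issue arises).
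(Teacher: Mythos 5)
Your argument is correct and is exactly the intended one: the paper states the observation without proof as a ``straightforward corollary'' of Lemma~\ref{lem:Lovasz}, and the implicit argument is precisely your restriction to the trace hypergraph on $\Lambda$ together with the fact that a fractional cover of $U\supseteq\Lambda$ pushes forward to a fractional cover of $\Lambda$. No differences worth noting.
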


We will rely on the following estimates of $\Omega$ by B\"or\"oczky and 
Wintsche \cite{BW03}.

\begin{lem}[{B\"or\"oczky -- Wintsche \cite{BW03}}]\label{lem:BWcapsize} Let 
$0<\phi<\pi/2$.
\begin{eqnarray}
 \Omega(\phi)&>& \frac{\sin^n\phi}{\sqrt{2\pi(n+1)}}, 
\label{eq:BWnagy}
 \\
 \Omega(\phi)&<& \frac{\sin^n\phi}{\sqrt{2\pi n}\cos\phi} 
,\;\;\;\;\mbox{ if } \phi\leq \arccos \frac{1}{\sqrt{n+1}},
\label{eq:BWkicsi}
\\
 \Omega(t\phi)&<& t^n\Omega(\phi),\;\;\;\;\mbox{ if } 1<t<\frac{\pi}{2\phi}. 
\label{eq:BWtszer}
\end{eqnarray}
\end{lem}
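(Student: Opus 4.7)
The plan is to reduce everything to the explicit formula
\[
\Omega(\phi) = \frac{1}{I_n}\int_0^\phi \sin^{n-1}(t)\,\di t, \qquad I_n := \int_0^\pi \sin^{n-1}(t)\,\di t,
\]
which comes from parameterizing $\Sen$ by the angle from a fixed pole: the latitude slices are $(n-1)$-spheres of radius $\sin t$, so the surface-area element is proportional to $\sin^{n-1}(t)\,\di t$, and $I_n$ is precisely the total-mass normalizer $|\Sen|/|\Se^{n-1}|$.

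For inequalities \eqref{eq:BWnagy} and \eqref{eq:BWkicsi}, I would substitute $u = \sin t$ on $[0,\phi]\subseteq[0,\pi/2]$ to get
\[
\int_0^\phi \sin^{n-1}(t)\,\di t = \int_0^{\sin\phi} \frac{u^{n-1}}{\sqrt{1-u^2}}\,\di u,
\]
and then use the trivial pointwise bounds $1 \leq 1/\sqrt{1-u^2} \leq 1/\cos\phi$ on $[0,\sin\phi]$ to sandwich this integral between $\sin^n\phi/n$ and $\sin^n\phi/(n\cos\phi)$. It remains to bound $I_n$ on both sides. For this I would invoke two classical identities, each obtained by integration by parts: the recursion $I_{n+2} = \frac{n}{n+1} I_n$ and the product formula $I_n I_{n+1} = 2\pi/n$. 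Since $(I_k)$ is strictly decreasing, $I_{n+1} > I_{n+2} = \frac{n}{n+1} I_n$, which combined with the product identity yields $\frac{2\pi}{n} > \frac{n}{n+1} I_n^2$, i.e.\ $I_n < \sqrt{2\pi(n+1)}/n$; and $I_n > I_{n+1}$ likewise gives $I_n^2 > 2\pi/n$, i.e.\ $I_n > \sqrt{2\pi/n}$. Feeding these two bounds on $I_n$ into the sandwich produces exactly \eqref{eq:BWnagy} and \eqref{eq:BWkicsi}; the hypothesis $\phi \leq \arccos(1/\sqrt{n+1})$ enters only to keep the factor $1/\cos\phi$ under control, so that the resulting upper bound is informative.

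For inequality \eqref{eq:BWtszer}, I would substitute $s = tr$ in $\int_0^{t\phi}\sin^{n-1}(s)\,\di s$ to write
\[
\Omega(t\phi) = \frac{1}{I_n}\int_0^\phi t\,\sin^{n-1}(tr)\,\di r.
\]
The hypothesis $t\phi < \pi/2$ guarantees $0 < r < tr < \pi/2$ throughout the interval, and there the map $x \mapsto \sin(x)/x$ is strictly decreasing, so $\sin(tr) < t\sin(r)$ pointwise. Raising to the $(n-1)$-th power and multiplying by $t$ gives $t\sin^{n-1}(tr) < t^n\sin^{n-1}(r)$, and integrating against $\di r/I_n$ yields $\Omega(t\phi) < t^n\Omega(\phi)$.

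The main obstacle I anticipate is extracting the \emph{sharp} Wallis constant $\sqrt{2\pi(n+1)}$ as opposed to $\sqrt{2\pi/(n-1)}$, which is what one obtains by merely applying log-convexity of $(I_k)$ to the identity $I_n I_{n+1} = 2\pi/n$. The two-sided comparison $\frac{n}{n+1} I_n < I_{n+1} < I_n$ coming from the explicit recursion $I_{n+2} = \frac{n}{n+1} I_n$ is precisely what closes this gap and delivers the stated constants.
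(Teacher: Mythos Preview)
The paper does not give its own proof of this lemma: it is quoted verbatim from B\"or\"oczky--Wintsche \cite{BW03} and used as a black box. So there is no ``paper's proof'' to compare against; what you have supplied is a self-contained argument where the paper has none.

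Your argument is correct. The integral representation $\Omega(\phi)=I_n^{-1}\int_0^\phi\sin^{n-1}t\,\di t$ is the standard one, the substitution $u=\sin t$ and the pointwise bounds $1\le(1-u^2)^{-1/2}\le 1/\cos\phi$ give the sandwich $\sin^n\phi/n\le\int_0^\phi\sin^{n-1}t\,\di t\le\sin^n\phi/(n\cos\phi)$, and your Wallis-type two-sided estimate $\sqrt{2\pi/n}<I_n<\sqrt{2\pi(n+1)}/n$ (from $I_nI_{n+1}=2\pi/n$ together with $I_{n+2}<I_{n+1}<I_n$ and $I_{n+2}=\tfrac{n}{n+1}I_n$) then yields exactly \eqref{eq:BWnagy} and \eqref{eq:BWkicsi}. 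You are also right that the hypothesis $\phi\le\arccos(1/\sqrt{n+1})$ is not used in the derivation of \eqref{eq:BWkicsi}; it only ensures the bound is below $1$ and hence nontrivial. For \eqref{eq:BWtszer}, the change of variables and the strict monotonicity of $x\mapsto\sin x/x$ on $(0,\pi/2)$ give $\sin(tr)<t\sin r$ pointwise, and integration finishes it. One small caveat: the strict inequality in \eqref{eq:BWtszer} requires $n\ge 2$, since for $n=1$ one has $\Omega(\phi)=\phi/\pi$ and $\Omega(t\phi)=t\,\Omega(\phi)$ exactly; this is harmless in the paper's applications but worth noting.
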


The following is known as Jordan's inequality:
\begin{equation}\label{eq:jordan}
\frac{2x}{\pi}\leq\sin x\;\;\mbox{ for }\;\;x\in[0,\pi/2]
\end{equation}

\section{Proof of the covering results in 
\texorpdfstring{$\Ren$}{Rn}}\label{sec:Renbound}

We present these proofs in the order of their difficulty. In this way, ideas 
and 
technicalities are --perhaps-- easier to separate.

\begin{proof}[Proof of Theorem~\ref{thm:cvxIG}]
The proof is simply a substitution into \eqref{eq:IG}. We take $Y=\Ren$, $X=K$, 
$\FF=\{L+x\st x\in K-L\}$, $\FF^\prime=\{L\sim T+x\st x\in K-L \}$. One can 
take $U=K-T$ as any member of $\Lambda$ not in $K-T$ could be 
dropped from $\Lambda$ and $\Lambda$ would still have the property that 
$\Lambda 
+T \supseteq K$. That proves \eqref{eq:cvxIG}. To prove 
\eqref{eq:cvxIGspec}, we notice that in the case when $\Lambda\subset K$, 
one can take $U=K$.
\end{proof}

\begin{proof}[Proof of Theorem~\ref{thm:Renbyanything}]
 Let $C$ denote the cube $C=[-a,a]^n$, where $a>0$ is large. 
Our goal is to cover $C$ by translates of $K$ economically.

Fix $\delta>0$, and let $\Lambda\subset\Ren$ be a 
finite set such that $\Lambda+\B(o,\delta/2)$ is a saturated (ie. maximal) 
packing of $\B(o,\delta/2)$ in $C+\B(o,\delta/2)$. Thus, by the 
maximality, we have that $\Lambda$ is a $\delta$-net of $C$ with respect to 
the Euclidean distance, ie. $\Lambda+\B(o,\delta)\supseteq C$.

By considering volume, for any $x\in \Ren$ we have
\begin{equation}\label{eq:lambdasmallRR}
 \card\big({\Lambda\cap (x+K_{-\delta})}\big)\leq 
 \frac{\vol \left(K_{-\delta} 
+\B(o,\delta/2)\right)}{\vol\left(\B(o,\delta/2)\right)}\leq
 \frac{\vol \left(K_{-\delta/2}\right)}{\vol\left(\B(o,\delta/2)\right)}.
\end{equation}

Let $\varepsilon>0$ be fixed. Clearly, if $a$ is sufficiently large then
\begin{equation}\label{eq:nstarobviousRR}
 N^\ast(C+\B(o,\delta/2),K_{-\delta})\leq
\frac{\vol\left(C+\B(o,\delta/2)-K_{-\delta}\right)}{\vol K_{-\delta}}\leq
 (1+\epsilon)\frac{\vol C}{\vol K_{-\delta}}.
\end{equation}

By \eqref{eq:cvxIG}, \eqref{eq:lambdasmallRR} and \eqref{eq:nstarobviousRR} we 
have
\begin{equation*}
 N(C,K)\leq 
  (1+\epsilon)
  \left(1+\ln
   \frac{\vol \left(K_{-\delta/2}\right)}{\vol\left(\B(o,\delta/2)\right)}
  \right)
  \frac{\vol C}{\vol K_{-\delta}}.
\end{equation*}

Finally,
\begin{equation*}
\label{eq:thetasymmRR}
 \theta(K)\leq 
 N(C,K)\vol(K)/\vol(C)
\end{equation*}
yields the promised bound.
\end{proof}

\begin{proof}[Proof of Theorem~\ref{thm:Rogers}]
 Let $C$ denote the cube $C=[-a,a]^n$, where $a>0$ is large. 
Our goal is to cover $C$ by translates of $K$ economically.
First, consider the case when $K=-K$. 

Let $\delta>0$ be fixed (to be chosen later) and let $\Lambda\subset\Ren$ be a 
finite set such that $\Lambda+\frac{\delta}{2}K$ is a saturated (ie. maximal) 
packing of $\frac{\delta}{2}K$ in $C-\frac{\delta}{2}K$. Thus, by the 
maximality, we have that $\Lambda$ is a $\delta$-net of $C$ with respect to 
$K$, ie.
$\Lambda+\delta K\supseteq C$.
By considering volume, for any $x\in \Ren$ we have
\begin{equation}\label{eq:lambdasmall}
 \card\big({\Lambda\cap (x+(1-\delta)K)}\big)\leq 
 \frac{\vol \left((1-\delta)K 
+\frac{\delta}{2}K\right)}{\vol\left(\frac{\delta}{2}K\right)}\leq
 \left(\frac{2}{\delta}\right)^n.
\end{equation}

Let $\varepsilon>0$ be fixed. Clearly, if $a$ is sufficiently large then
\begin{equation}\label{eq:nstarobvious}
 N^\ast(C-\delta K,(1-\delta)K)\leq
 (1+\epsilon)\frac{\vol C}{(1-\delta)^n\vol K}.
\end{equation}

By \eqref{eq:cvxIG}, \eqref{eq:lambdasmall} and \eqref{eq:nstarobvious} we have
\begin{equation*}
 N(C,K)\leq 
  \frac{1+\epsilon}{(1-\delta)^n}
  \left(1+n\ln
  \left(\frac{2}{\delta}\right)\right)
  \frac{\vol C}{\vol K}.
\end{equation*}

On the other hand,
\begin{equation}
\label{eq:thetasymm}
 \theta(K)\leq 
 N(C,K)\vol(K)/\vol(C)\leq
  \frac{1+\epsilon}{(1-\delta)^n}
  \left(1+n\ln
  \left(\frac{2}{\delta}\right)\right).
\end{equation}

%Finally, with the choice of $\delta=\frac{1}{2n\ln n}$, the same 
%computation as in \eqref{eq:lnnesszam} yields

We choose $\delta=\frac{1}{2n\ln n}$, and the following standard computation 
\begin{eqnarray}\label{eq:lnnesszam}
  (1+\varepsilon)^{-1}\theta(K)\leq
  \left(1+n\ln (4n\ln n)\right)\exp(1/\ln n)
  \\
  \leq
  \left(1+n\ln (4n\ln n)\right)  
   (1+2/\ln n)
  \leq
  \left(n\ln n+n\ln\ln n+ 5n\right),\nonumber
\end{eqnarray}
yields the desired bound (as $\varepsilon$ can be taken arbitrarily close to 0).

Next, consider the general case, that is when $K$ is not necessarily symmetric 
about the origin. We need to make the following mo\-di\-fi\-ca\-tions. 
Milman and Pajor (cf. Corollary~3 of \cite{MiPa00}) showed that, if the 
centroid (that is, the center of mass) of $K$ is the origin, then 
$\vol(K\cap-K)\geq \frac{\vol K}{2^n}$. (Note that the existence of a translate 
of $K$ for which this inequality holds was proved by Stein \cite{St56} using a 
probabilistic argument.) We define $\Lambda$ as a saturated 
packing of translates of $\frac{\delta}{2}(K\cap -K)$ in 
$C-\frac{\delta}{2}(K\cap -K)$. Thus, we have $C\subseteq \Lambda+\delta 
(K\cap-K)\subseteq \Lambda+\delta K$. Instead of \eqref{eq:lambdasmall}, we now 
have
\begin{equation*}
 \card\big({\Lambda\cap (x+(1-\delta)K)}\big)\leq 
 \left(\frac{4}{\delta}\right)^n.
\end{equation*}
for any $x\in\Ren$. Rolling this change through the proof, at the end in place 
of \eqref{eq:thetasymm}, we obtain
\begin{equation*}
 \theta(K)\leq 
\frac{1+\epsilon}{(1-\delta)^n}
  \left(1+n\ln
  \left(\frac{4}{\delta}\right)\right),
\end{equation*}
which, however, is still less than
$(1+\epsilon)\left(n\ln n+n\ln\ln n+ 5n\right)$ with the same choice of 
$\delta=\frac{1}{2n\ln n}$.
\end{proof}

\section{Proof of the spherical results}\label{sec:Sphbound}

\begin{proof}[Proof of Theorem~\ref{thm:spherebyanything}]
Let $\Lambda$ be the set of centers of a saturated (ie. maximal) packing of 
caps of radius $\delta/2$.
Clearly, $\Lambda$ is a $\delta$-net of $\Sen$, and thus, if we cover 
$\Lambda$ by rotated copies of radius $K_{-\delta}$, then the same rotations 
yield a covering of $\Sen$ by copies of $K$.

Let $\bar\sigma$ denote the probability Haar measure on $\Sonp$.
Let $H\subset\Sen$ be a measurable set, 
and denote the family of rotated copies of $H$ by
$\FF(H)=\{AH\st A\in\Sonp\}$.
Recall that for any fixed $u\in\Sen$ we have
\begin{eqnarray*}
 \bar\sigma(\{A\in\Sonp\st u\in AH\})
 =\\
 \bar\sigma(\{A\in\Sonp\st u\in A^{-1}H\})=
\nonumber
 \\
 \bar\sigma(\{A\in\Sonp\st Au\in H\})
 =
\sigma(H) \nonumber.
\end{eqnarray*}
It follows that the measure $\frac{\bar\sigma}{\sigma(H)}$ on $\Sonp$ is a 
fractional cover of $\Sen$ by $\FF(H)$ and thus,
$\tau^{\ast}(\Sen,\FF(H))\leq\frac{1}{\sigma(H)}$.

Thus by \eqref{eq:IG}, we obtain the following for the density of a covering by 
rotated images of $K$:
\[
\mbox{ density } \leq
 \sigma(K)\tau(\Sen,\FF(K))\leq\sigma(K)\tau(\Lambda, \FF(K_{-\delta}))
\]
\[
  \leq
  (1+\ln( \max_{A\in\Sonp} \card \{\Lambda\cap AK_{-\delta} \} ) ) 
  \cdot \frac{\sigma(K)}{\sigma(K_{-\delta})}
\]
\[
 \leq
  \frac{\sigma(K)}{\sigma(K_{-\delta})}
  \left( 
1+\ln\frac{\sigma\left(K_{-\delta/2}\right)}{\Omega\left(\frac{\delta}{2}\right)
}\right). 
\]
Since it holds for any $\delta>0$, the theorem follows.
\end{proof}

\begin{proof}[Proof of Theorem~\ref{thm:spherebycaps}]
We will apply Theorem~\ref{thm:spherebyanything} with $K$ being a cap of 
spherical radius $\phi$.
We set $\delta=\eta\phi$, where $\eta$ will be specified later.
By Theorem~\ref{thm:spherebyanything} and \eqref{eq:BWtszer}, we obtain for the 
density of a covering of $\Sen$ by caps of radius $\phi$:
\begin{equation*}
\mbox{ density } \leq
  \left(1+n\ln\left(\frac{2}{\eta}\right) \right) 
  \cdot \left(\frac{1}{1-\eta}\right)^n.
\end{equation*}
We choose $\eta=\frac{1}{2n\ln n}$, and the same computation as in 
\eqref{eq:lnnesszam} yields the desired bound.
\end{proof}

\begin{proof}[Proof of Corollary~\ref{cor:spherebyconvex}]
We set $\delta=\kappa\rho$.
First, observe that the measure of the belt-like region ${K\setminus 
K_{-\delta}}$ at the boundary of $K$ is at most as large as the measure of the 
belt-like region $C(v,\rho)\setminus C(c,\rho-\delta)$ at the boundary of the 
circum-cap $C(v,\rho)$ of $K$.

Next, combine
$\ln\frac{\sigma\left(K_{-\delta/2}\right)}{\Omega\left(\frac{\delta}{2}\right)}
\leq
\ln\frac{1}{\Omega\left(\frac{\delta}{2}\right)}$ with \eqref{eq:BWtszer} and 
\eqref{eq:jordan} to obtain the statement.
\end{proof}

\section*{Acknowledgement}
The author is grateful for the conversations with Károly Bezdek, Gábor Fejes 
Tóth and J\'anos Pach.

\bibliographystyle{alpha}
%\bibliographystyle{amsalpha}
%\bibliographystyle{amsxport}
%\nocite*
\bibliography{biblio}

\end{document}